\newcommand{\re}{\mathbb{R}}
\newcommand{\Q}{\mathbb{Q}}
\newcommand{\diag}{\mbox{diag}}
\newcommand{\lmd}{\lambda}
\newcommand{\nn}{\nonumber}
\def\af{\alpha}
\newcommand{\reff}[1]{(\ref{#1})}
\newcommand{\pt}{\partial}
\newcommand{\mc}[1]{\mathcal{#1}}
\newcommand{\bdes}{\begin{description}}
\newcommand{\edes}{\end{description}}
\newcommand{\bal}{\begin{align}}
\newcommand{\eal}{\end{align}}
\newcommand{\bnum}{\begin{enumerate}}
\newcommand{\enum}{\end{enumerate}}
\newcommand{\bit}{\begin{itemize}}
\newcommand{\eit}{\end{itemize}}
\newcommand{\bea}{\begin{eqnarray}}
\newcommand{\eea}{\end{eqnarray}}
\newcommand{\be}{\begin{equation}}
\newcommand{\ee}{\end{equation}}
\newcommand{\baray}{\begin{array}}
\newcommand{\earay}{\end{array}}
\newcommand{\bsry}{\begin{subarray}}
\newcommand{\esry}{\end{subarray}}
\newcommand{\bca}{\begin{cases}}
\newcommand{\eca}{\end{cases}}
\newcommand{\bcen}{\begin{center}}
\newcommand{\ecen}{\end{center}}
\newcommand{\bbm}{\begin{bmatrix}}
\newcommand{\ebm}{\end{bmatrix}}
\newcommand{\bmx}{\begin{matrix}}
\newcommand{\emx}{\end{matrix}}
\newcommand{\bpm}{\begin{pmatrix}}
\newcommand{\epm}{\end{pmatrix}}
\newcommand{\btab}{\begin{tabular}}
\newcommand{\etab}{\end{tabular}}
\newsavebox{\myname}
\newsavebox{\emailaddr}
\theoremstyle{plain}
\newtheorem{theorem}{Theorem}[section]
\newtheorem{lemma}[theorem]{Lemma}
\newtheorem{cor}[theorem]{Corollary}
\theoremstyle{definition}
\newtheorem{example}[theorem]{Example}
\date{}
\begin{document}

\title{ Matrix Cubes Parametrized by Eigenvalues}
\author{Jiawang Nie and Bernd Sturmfels\footnote{supported by an Alexander von Humboldt research prize and NSF grant DMS-0456960}}

\maketitle

\abstract{
An elimination problem in semidefinite programming
is solved by means of tensor algebra.
It concerns families of matrix cube problems
whose constraints are the minimum and maximum eigenvalue function
on an affine space of symmetric matrices.
An LMI representation is given for the convex set
of all feasible instances, and its boundary is studied
from the perspective of algebraic geometry.
This generalizes the earlier
work \cite{NPS} with Parrilo on $k$-ellipses and $k$-ellipsoids.
}

\bigskip
{\bf  Key words } \,
Linear matrix inequality (LMI),  semidefinite programming (SDP),
matrix cube, tensor product, tensor sum, k-ellipse, algebraic degree.

\section{Introduction}

The {\em matrix cube problem} in semidefinite programming
is concerned with the following question.
Given real symmetric $N {\times} N$-matrices $A_0, A_1,\ldots, A_m$,
does every point $(t_1,\ldots,t_m)$ in the cube
$\,\prod_{i=1}^m [\lambda_i,\mu_i]\,$
satisfy the matrix inequality
$$ A_0 \,+\, \sum_{k=1}^m t_k A_k \,\, \succeq \,\, 0 \,\, ? $$
The inequality means that the symmetric matrix
$ A_0 + \sum_{k=1}^m t_k A_k $
is positive semidefinite, i.e.~its $N$ eigenvalues are all non-negative reals.
This problem is NP-hard \cite{BN}, and it
 has important applications in robust optimization and control \cite{BNR,NemMC},
e.g., in Lyapunov stability analysis for uncertain dynamical systems,
and for various combinatorial problems which can be reduced to maximizing
a positive definite quadratic form over the unit cube.
For a recent study see \cite{CL}.

In this paper we examine parametrized families of matrix
cube problems, where the lower and upper bounds
that specify the cube are the eigenvalues of symmetric
matrices that range over a linear space
of matrices. We define the set
\be  \label{eq:Cdef}
\mc{C} \,\,\, = \,\,\, \left\{\,
(x,d) \in\re^n \times \re \,\, \left| \,
\baray{l}
d \cdot A_0 + \overset{m}{\underset{k=1}{\sum}}\, t_k A_k \succeq 0
\,\,\,\, \hbox{whenever} \\
\lmd_{\min}(B_k(x)) \leq t_k \leq \lmd_{\max}(B_k(x))
\\ \qquad \qquad \qquad \hbox{for} \,\,\, k=1,2,\ldots, m
\earay
\right.\right\},
\ee
where the $A_i$ are constant symmetric matrices of size $N_0\times N_0$,
the symbols
$\lmd_{\min}(\cdot)$ and $\lmd_{\max}(\cdot)$
denote the minimum and maximum eigenvalues of a matrix, and
the $N_k \times N_k$ matrices $B_k(x)$ are linear matrix polynomials of the form
\[
B_k(x) \,\,\, = \,\,\, B_0^{(k)} + x_1 B_1^{(k)} + \cdots + x_n B_n^{(k)}.
\]
Here $B_0^{(k)}, B_1^{(k)}, \ldots, B_n^{(k)}$ are constant symmetric
$N_k\times N_k$ matrices for all $k$.

Since the minimum eigenvalue function $\lmd_{\min}(\cdot)$ is concave and
the maximum eigenvalue function $\lmd_{\max}(\cdot)$ is convex, we
see that $\mc{C}$ is a convex subset in $\re^{n+1}$.
Our definition of $\,\mc{C}\,$ is by a polynomial system
in free variables $(x,d)$ and universally quantified variables $t$.
Quantifier elimination in real algebraic geometry \cite{BPR} tells us
that $\mc{C}$ is semialgebraic, which means that it can
be described by  a Boolean combination
of polynomial equalities or inequalities in $(x,d)$.
However, to compute such a description by
algebraic elimination algorithms is infeasible.

Linear matrix inequalities (LMI) are a useful and efficient tool
in system and control theory \cite{BEFB}.
LMI representations are convenient
for building convex optimization models,
especially in semidefinite programming \cite{WSV}.
The set described by an LMI is always convex and semialgebraic.
Since our set $\mc{C}$ is convex and semialgebraic,
it is natural to
ask whether $\mc{C}$ admits an LMI representation~?

Our aim is to  answer this question affirmatively.
Theorem~\ref{thm:MainRep} states that
\begin{equation}
\label{LMIrep}
\mc{C}  \,\,\, = \,\,\, \left\{ (x,d) \in\re^n \times \re:\, \mc{L}(x,d) \succeq 0  \right \},
\end{equation}
where $ \mc{L}(x,d) $ is a linear matrix polynomial whose
coefficients are larger symmetric matrices that are constructed
from the matrices $A_i$ and $B^{(k)}_j$.
The construction involves operations from tensor algebra and is carried out in Section~3.

First, however, some motivation is needed.
In Section 2 we shall explain why the convex semialgebraic sets $\mc{C}$
are interesting. This is done by discussing
several geometric applications, notably
the study of $m$-ellipses and $m$-ellipsoids \cite{NPS}.

Section 4 is devoted to algebraic geometry questions
related to our set $\mc{C}$:
What is the Zariski closure $\mc{Z}(\pt\mc{C})$ of the boundary $\pt\mc{C}$~?
What is the polynomial defining $\mc{Z}(\pt\mc{C})$~?
What is the degree of $\mc{Z}(\pt\mc{C})$~?
Is the boundary $\pt\mc{C}$ irreducible~?

In Section 5, we discuss an application to robust control,
explore the notion of matrix ellipsoids, and conclude with some
directions for further research.

\section{Ellipses and beyond}

In this section we illustrate the construction of the set
$\mc{C}$ for some special cases. The first case to
consider is $N_1 = \cdots = N_m = 1$ when each
$B_k(x) = b_k^Tx + \beta_k$ is a linear scalar function.
Then our elimination problem is solved as follows:
\be  \nn
\mc{C} \,\,\, = \,\,\,
\left\{ (x,d) \in \re^{n+1} :\,
d \cdot A_0 +\sum_{k=1}^m (b_k^Tx + \beta_k)  A_k \succeq 0\right\}.
\ee
Thus $\mc{C}$ is a spectrahedron (the solution set of an LMI) and, conversely
every spectrahedron arises in this way.
The Zariski closure $\mc{Z}(\pt\mc{C})$ of its boundary $\pt\mc{C}$
is the hypersurface defined by the vanishing of the determinant of the above matrix.
For generic data $A_i, b_k,\beta_k$ this hypersurface is irreducible
and has degree $N_0$.
Throughout this paper, we shall use the term ``generic'' in the sense
of algebraic geometry (cf.~\cite{NRS}).
Randomly chosen data are generic with probability one.

A natural extension of the previous example is the case when each given matrix
$B_k(x)$ is diagonal. We write this diagonal matrix as follows:
\[
B_k(x) \,\,\, = \,\,\, \diag\big( {b_k^{(1)}}^T \! x + \beta_k^{(1)},
 \ldots ,  \, {b_k^{(N_k)}}^T \! x + \beta_k^{(N_k)} \big)
\]
Then our spectrahedron can be described by an intersection of LMIs:
\be  \nn
\mc{C} \,\,\, = \,\,\,
\left\{ (x,d):\,
d \cdot A_0 +\sum_{k=1}^m ({b_k^{(i_k)}}^T  x + \beta_k^{(i_k)})  A_k \succeq 0, \,\,\,
1\leq i_k \leq N_k , 1\leq k\leq m
\right\}.
\ee
The Zariski closure $\mc{Z}(\pt\mc{C})$ of the boundary $\pt\mc{C}$
is a hypersurface which is typically reducible. It is defined by the product of
all determinants of the above matrices which contribute an
active constraint for $\mc{C}$. Each of the determinants
has degree $N_0$ and there can be as many as
$N_1N_2\cdots N_m$ of these boundary components.

The point of departure for this project was our paper
with Parrilo \cite{NPS} whose result we briefly review.
Given points $(u_1,v_1) , \ldots, (u_m,v_m)$ in the plane $\re^2$
and a parameter $d > 0$, the corresponding {\em $m$-ellipse} $\mc{E}_d$ is the set
of all points $(x_1,x_2)$
whose sum of distances to the given points $(u_i,v_i)$ is at most $d$.
In symbols,
\be  \nn
\mc{E}_d \,\,\, = \,\,\, \left\{ x\in \re^2 \,:\,
 \sum_{k=1}^m  \sqrt{(x_1-u_k)^2 + (x_2-v_k)^2} \,\leq \, d \right\}.
\ee
In \cite{NPS} it is shown that $\mc{E}_d$ is a spectrahedron,
an explicit LMI representation of size $2^m\times 2^m$ is given, and
the degree of $ \partial \mc{E}$ is shown to be
$2^m$ when $m$ is odd, and $2^m - \binom{m}{m/2}$
when $m$ is even.
For instance, if $m=3$, $(u_1,v_1)=(0,0), (u_2,v_2)=(1,0)$ and $ (u_3,v_3)=(0,1)$,
then the $3$-ellipse $\mc{E}_d$ has the LMI representation

\begin{figure}
\centering
 \includegraphics[height=8.3cm]{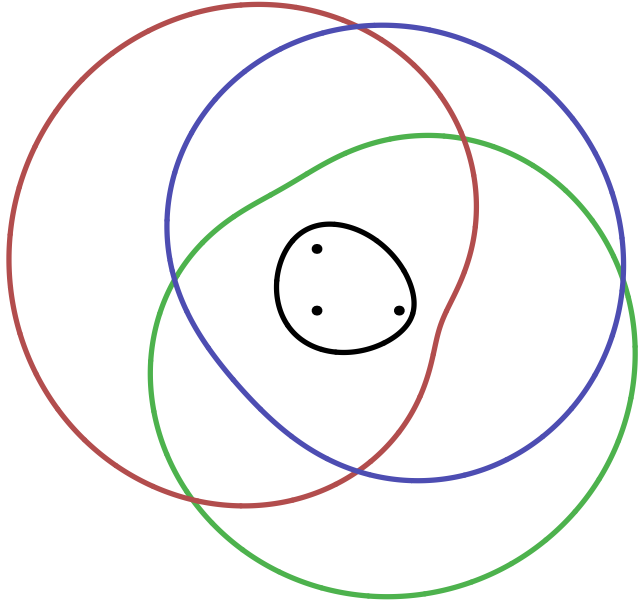}
\vskip -0.4cm
\caption{The Zariski closure of the 3-ellipse is a
curve of degree eight.}
\label{fig:components3ellipse}
\end{figure}

{\scriptsize
 \[\bbm
  d+3x_1-1 \!\! & \!             x_2-1 \! & \!             x_2 \! & \!                   0 \! & \!
     x_2 \! & \!                   0 \! & \!                   0 \! & \!                   0 \\
              x_2-1 \! & \!\!   d+x_1-1 \!\! & \!                   0 \! & \!
                    x_2 \! & \!                   0 \! & \!             x_2\! & \!                   0 \! & \!                   0 \\
              x_2 \! & \!                   0 \! & \!\! d+x_1+1 \! & \!
       x_2-1 \! & \!                   0 \! & \!                   0 \! & \!             x_2 \! & \!                   0 \\
            0 \! & \!             x_2 \! & \!             x_2-1 \! & \!\!  d-x_1+1 \! & \!
                  0 \! & \!                   0 \! & \!                   0 \! & \!             x_2 \\
    x_2 \! & \!       0 \! & \!                   0 \! & \!                   0 \! & \!\!   d+x_1-1 \! & \!
                 x_2-1 \! & \!             x_2 \! & \!                   0 \\
              0 \! & \!             x_2 \! & \!                   0 \! & \!                   0 \! & \!
                x_2-1 \! & \!   d-x_1-1 \!\! & \!                   0 \! & \!             x_2 \\
              0 \! & \!                   0 \! & \!             x_2 \! & \!                   0 \! & \!
 x_2 \! & \!                   0 \! & \!   d-x_1+1 \! \! & \!             x_2-1 \\
          0 \! & \!                   0 \! & \!                   0 \! & \!             x_2 \! & \!
        0 \! & \!             x_2 \! & \!             x_2-1 \! & \!\! \!  d-3x_1+1 \\
 \ebm \succeq 0.
\]}
See Figure \ref{fig:components3ellipse} for a picture of this $3$-ellipse
and the Zariski closure of its boundary.

We can model the $m$-ellipse using matrix cubes as in \reff{eq:Cdef} as follows.
Let $N_0=1$ and define each scalar $A_i$ to be $1$, and
let $N_1 = \cdots = N_m = 2$ and define
\[
B_k(x) \,\, = \,\, \bbm x_1 - u_k & x_2 - v_k \\ x_2 - v_k & u_k - x_1 \ebm \, \quad
\hbox{for} \,\,\, k = 1,\cdots,m.
\]
The eigenvalues of $B_k(x)$ are $\, \pm \sqrt{(x_1-u_k)^2 + (x_2-v_k)^2} $,
and we find that
$$ \mc{C} = \bigl\{(x,d) \in \re^3  \,: \, d \geq t_1  + \cdots + t_m
\,\, \hbox{
whenever $|t_k| \leq \sqrt{(x_1{-}u_k)^2 + (x_2{-}v_k)^2}$} \bigr\} . $$
This formula characterizes the parametric $m$-ellipse
$$ \mc{C} \,\, = \,\, \bigl\{ (x_1,x_2,d) \in \re^3 \,\,: \,\,
(x_1, x_2) \in \mc{E}_d \bigr\}, $$
and its LMI representation (\ref{LMIrep})
is precisely that given in \cite[Theorem 2.3]{NPS}.
The construction extends to $m$-ellipsoids,
where the points lie in a higher-dimensional space.
We shall return to this topic and its algebraic subtleties
in Example \ref{elliexa}.
In Section 5 we introduce a matrix version of
the $m$-ellipses and $m$-ellipsoids.

Consider now the case
when $N_0=1$ and $A_0 = A_1 = \cdots = A_m = -1$
but the $B_k(x)$ are allowed to be
arbitrary symmetric $N_k \times N_k$-matrices
whose entries are linear in $x$. Then
the spectrahedron in \reff{eq:Cdef} has the form
$$
\mc{C} \,\,\, = \,\,\,
\bigl\{
(x,d) \in \re^n \times \re \,:\,
\sum_{k=1}^m \lmd_{\max}(B_k(x)) \leq d
\bigr\}.
$$
A large class of important convex functions on $\re^n$ can be
represented in the form $\,x \mapsto  \lmd_{\max}(B_k(x)) $.
Our main result in the next section gives a recipe for
constructing an explicit LMI representation for the graph of
a sum of  such convex functions. The existence
of such a representation is not obvious, given that
the Minkowski sums of two spectrahedra is
generally not a spectrahedron \cite[\S 3.1]{RG}.

\section{Derivation of the LMI representation} \label{sec:lmi}
\setcounter{equation}{0}

In this section we show that the convex set $\mc{C}$
in (\ref{eq:Cdef}) is a spectrahedron, and we apply tensor operations
as in \cite{NPS} to derive an explicit LMI representation.
We begin with a short review of tensor sum and tensor product
of square matrices.
Given two square matrices $F =( F_{ij})_{1\leq i,j\leq r}$ and
$G =( G_{k\ell})_{1\leq k,\ell\leq s}$,
their standard {\it tensor product $\otimes$} is the block matrix
of format $rs \times rs$ which defined as
\[
F \otimes  G \,\,\, =  \,\,\, \big( F_{ij} G)_{1\leq i,j\leq r}.
\]
Based on tensor product $\otimes$, we define the {\it tensor sum $\oplus$}
to be the $rs \times rs$-matrix
\[
F \oplus G \,\,\,= \,\,\, F \otimes  I_s + I_t \otimes G.
\]
Here $I_r$ denotes the identity matrix of size $r\times r$.
For instance, the tensor sum of two $2 \times 2$-matrices
is given by the formula

\[
\bbm a & b \\ c & d \ebm \oplus
\bbm e & f \\ g & h \ebm \,\,\, = \,\,\,
 \bbm
\bmx a+e & f \\ g & a+h \emx  &
\bmx  b & 0 \\ 0 & b \emx  \\
\bmx  c & 0 \\ 0 & c \emx  &
\bmx d+e & f \\ g & d+h \emx
\ebm.
\]
Tensor products and tensor sums of matrices are also known as
{\it Kronecker products} and {\it Kronecker sums} \cite{HJ}.
Note that the tensor operations $\otimes$ and $\oplus$
are not commutative but they are associative. Hence we
can remove parentheses when we take the
tensor product or tensor sum of $k$ matrices in a fixed order.

The eigenvalues of $F \otimes G$ are the products of
pairs of eigenvalues of $F$ and $G$. Similarly, the
eigenvalues of $F \oplus G$ are the
sums of such pairs. This is well-known for
tensor products, but perhaps slightly less
so for tensor sums. We therefore explicitly state the following
lemma on diagonalization of tensor sums.

\begin{lemma} {\rm \cite[Lemma 2.2]{NPS} } \label{lem:congru}
Let $M_1,\ldots,M_k$ be symmetric matrices,
$U_1,\ldots,U_k$ orthogonal matrices,
and $\Lambda_1,\ldots,\Lambda_k$ diagonal matrices
such that $\,M_i = U_i \cdot \Lambda_i \cdot U_i^T\,$ for $ i=1,\ldots,k$. Then
the tensor sum transforms as follows:
\[
(U_1 \otimes \cdots \otimes U_k)^T \cdot
(M_1 \oplus \cdots \oplus M_k) \cdot
(U_1 \otimes \cdots \otimes U_k) \,\,=\,\,
\Lambda_1 \oplus \cdots \oplus \Lambda_k.
\]
In particular, the eigenvalues of  $M_1 \oplus M_2 \oplus \cdots \oplus M_k$
are the sums $\lambda_1 + \lambda_2 + \cdots + \lambda_k$ where
$\lambda_1$ is any eigenvalue of $M_1$,
$\lambda_2$ is any eigenvalue of $M_2$, etc.
\end{lemma}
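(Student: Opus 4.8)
The plan is to reduce the whole statement to two elementary and purely formal properties of the Kronecker product, namely the transpose rule $(F \otimes G)^T = F^T \otimes G^T$ and the mixed-product property $(F \otimes G)(H \otimes K) = (FH) \otimes (GK)$, valid whenever the indicated matrix products are defined. Both follow directly from the entrywise definition $F \otimes G = (F_{ij} G)$ and extend, by associativity, to products of $k$ factors. As a first consequence I would record that $U := U_1 \otimes \cdots \otimes U_k$ is itself orthogonal: by the transpose and mixed-product rules, $U^T U = (U_1^T U_1) \otimes \cdots \otimes (U_k^T U_k) = I \otimes \cdots \otimes I = I$. This is what will let me pass freely between congruence and similarity at the end.

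The heart of the argument is to rewrite the tensor sum in expanded form. Unwinding the recursive definition $F \oplus G = F \otimes I + I \otimes G$ and using associativity, I would show that
\[
M_1 \oplus \cdots \oplus M_k \,\,=\,\, \sum_{i=1}^k I_{N_1} \otimes \cdots \otimes I_{N_{i-1}} \otimes M_i \otimes I_{N_{i+1}} \otimes \cdots \otimes I_{N_k},
\]
i.e.\ a sum of $k$ elementary Kronecker products, each carrying a single nontrivial factor $M_i$ in slot $i$ and identity matrices elsewhere. Conjugating termwise by $U$ and applying the mixed-product property in each slot collapses every factor: $U_j^T I_{N_j} U_j = I_{N_j}$ for $j \neq i$, while $U_i^T M_i U_i = \Lambda_i$ by hypothesis. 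Hence the $i$-th term becomes $I \otimes \cdots \otimes \Lambda_i \otimes \cdots \otimes I$, and summing over $i$ reassembles exactly $\Lambda_1 \oplus \cdots \oplus \Lambda_k$. This proves the displayed identity.

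For the ``in particular'' clause I would observe that $\Lambda_1 \oplus \cdots \oplus \Lambda_k$ is diagonal, with diagonal entries obtained by adding one diagonal entry from each $\Lambda_i$; these are precisely all sums $\lambda_1 + \cdots + \lambda_k$ of eigenvalues, one taken from each $M_i$. Since $U$ is orthogonal, the congruence in the displayed identity is simultaneously a similarity $U^{-1}(M_1 \oplus \cdots \oplus M_k)U$, so the two sides share the same spectrum, giving the eigenvalue claim. I do not anticipate a genuine obstacle here; the only step needing care is the bookkeeping in the expansion of the tensor sum, where associativity and the precise placement of the identity blocks must be tracked. An alternative, perhaps cleaner, route is a short induction on $k$, peeling off $M_k$ via $(\,\cdot\,) \oplus M_k$ and invoking the two-factor case together with the inductive hypothesis; the mixed-product property does all the real work either way.
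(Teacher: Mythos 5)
Your proof is correct and complete: the expansion of the $k$-fold tensor sum into elementary Kronecker products with a single $M_i$ per term, followed by termwise conjugation via the mixed-product property, is exactly the standard argument, and the orthogonality of $U_1\otimes\cdots\otimes U_k$ correctly upgrades the congruence to a similarity for the spectral claim. Note that the paper itself gives no proof of this lemma, importing it from \cite[Lemma 2.2]{NPS}, where essentially this same argument appears; so you have supplied the intended reasoning rather than deviated from it.
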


We now turn to the construction of an LMI representation for
the convex semialgebraic set $\mc{C}$.
First, we introduce a linear operator $\mc{A}$
on the space of linear matrix polynomials in $m+1$
unknowns  $a=(a_0,a_1,\ldots,a_m)$. Namely,
let $P_a(x,d)$ be such a linear matrix polynomial of the form
\begin{equation}
\label{Pdef}
P_a(x,d) \,\,\, = \,\,\, a_0P_0(x,d)+a_1P_1(x,d)+\cdots+a_mP_m(x,d)
\end{equation}
where the coefficients $P_0(x,d),\ldots,P_m(x,d)$ are matrix polynomials in $(x,d)$.
Using the matrices $A_0,A_1,\ldots,A_n$ in (\ref{eq:Cdef}), we define
the linear operator $\mc{A}$ by
\[
\mc{A}\bigl(P_a(x,d)\bigr) \,\, :=
\,\, P_0(x,d)\otimes A_0+P_1(x,d)\otimes A_1+\cdots+ P_m(x,d)\otimes A_m.
\]
Second, using the matrices $B_k(x)$ in (\ref{eq:Cdef}) we define
the following tensor sum
\be \nn
L_a(x,d) \,\, := \,\,  ( d a_0) \oplus
(a_1 B_1(x))  \oplus  \, \cdots \, \oplus  (a_mB_m(x)).
\ee
This expression is linear matrix polynomial in  $a=(a_0,a_1,\ldots,a_m)$,
and we set
\begin{equation}
\label{DefOfL}
\mc{L}(x,d) \,\,\, := \,\,\, \mc{A} \big(L_a(x,d)\big).
\end{equation}
Since $L_a(x,d)$ is linear in both $(x,d)$ and $a$,
and since $\mc{A}$ is a linear operator, the matrix
$\mc{L}(x,d)$ depends linearly on $(x,d)$ and on
the matrices $A_0,A_1,\ldots,A_m$.

\begin{theorem}  \label{thm:MainRep}
The convex semialgebraic set $\mc{C} \subset \re^{n+1}$
in (\ref{eq:Cdef}) is a spectrahedron,
and it can be represented by the LMI~\reff{LMIrep}.
\end{theorem}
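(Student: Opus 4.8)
The plan is to fix $x$, simultaneously diagonalize the matrices $B_k(x)$, and transform $\mc{L}(x,d)$ by a single orthogonal congruence into a block-diagonal matrix whose diagonal blocks are exactly the matrices $d A_0 + \sum_k t_k A_k$ evaluated at the eigenvalue tuples of the $B_k(x)$. Positive semidefiniteness of $\mc{L}(x,d)$ then reduces to a finite family of LMIs, which I will match against the defining condition of $\mc{C}$.

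First I would make the operator $\mc{A}$ and the tensor sum $L_a$ concrete by reading off the coefficient matrices in the expansion $L_a(x,d) = a_0 P_0 + a_1 P_1 + \cdots + a_m P_m$. The definition of the iterated tensor sum gives $P_0 = d\,I_{N_1\cdots N_m}$ and, for $k\ge 1$, $P_k = I_{N_1\cdots N_{k-1}}\otimes B_k(x)\otimes I_{N_{k+1}\cdots N_m}$, so that
\[
\mc{L}(x,d)\,=\,(d\,I)\otimes A_0 \,+\, \sum_{k=1}^m \bigl(I\otimes B_k(x)\otimes I\bigr)\otimes A_k .
\]
Now fix $x$ and diagonalize each $B_k(x) = U_k\Lambda_k U_k^T$ with $U_k$ orthogonal and $\Lambda_k = \diag(\lambda_1^{(k)},\ldots,\lambda_{N_k}^{(k)})$. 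Put $W = U_1\otimes\cdots\otimes U_m$ and $\tilde U = W\otimes I_{N_0}$. By Lemma~\ref{lem:congru}, congruence by $W$ sends the tensor sum $L_a$ to $(d a_0)\oplus(a_1\Lambda_1)\oplus\cdots\oplus(a_m\Lambda_m)$; comparing coefficients of $a_k$ yields $W^T P_k W = I\otimes\Lambda_k\otimes I$ for $k\ge 1$ and $W^T P_0 W = d\,I$. The mixed-product identity $(F\otimes G)(H\otimes K)=(FH)\otimes(GK)$ then lets me carry $\tilde U$ through the outer tensoring with the $A_k$, giving
\[
\tilde U^T \mc{L}(x,d)\,\tilde U \,=\, (d\,I)\otimes A_0 \,+\, \sum_{k=1}^m (I\otimes\Lambda_k\otimes I)\otimes A_k .
\]
Indexing the first tensor factor by tuples $(i_1,\ldots,i_m)$, this matrix is block diagonal with the $N_0\times N_0$ block $\,d A_0 + \sum_{k=1}^m \lambda_{i_k}^{(k)} A_k\,$ in position $(i_1,\ldots,i_m)$. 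Hence $\mc{L}(x,d)\succeq 0$ if and only if $\,d A_0 + \sum_{k=1}^m \lambda_{i_k}^{(k)} A_k\succeq 0\,$ for every choice of eigenvalues.

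Finally I would match this finite family of LMIs with membership in $\mc{C}$. One inclusion is immediate: each eigenvalue $\lambda_{i_k}^{(k)}$ lies in $[\lambda_{\min}(B_k(x)),\lambda_{\max}(B_k(x))]$, so $(x,d)\in\mc{C}$ forces every block to be positive semidefinite. For the reverse, write $M(t) = d A_0 + \sum_{k=1}^m t_k A_k$, which is affine in $t$, and observe that the box $\prod_{k=1}^m [\lambda_{\min}(B_k(x)),\lambda_{\max}(B_k(x))]$ is the convex hull of its vertices, whose coordinates are the extreme eigenvalues $\lambda_{\min}(B_k(x))$ and $\lambda_{\max}(B_k(x))$ --- and these occur among the $\lambda_{i_k}^{(k)}$. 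Since the positive semidefinite cone is convex and $M$ is affine (the relation $\sum_j\theta_j = 1$ for a convex combination absorbs the constant term $d A_0$), positive semidefiniteness at the vertex blocks propagates to $M(t)\succeq 0$ for all $t$ in the box, i.e.\ to $(x,d)\in\mc{C}$.

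The step I expect to be most delicate is the block-diagonalization: one must check that the single congruence $\tilde U = W\otimes I_{N_0}$ simultaneously diagonalizes the inner tensor sum through Lemma~\ref{lem:congru} while commuting with the outer operator $\mc{A}$ through the mixed-product identity, so that the blocks emerge exactly as $d A_0 + \sum_k \lambda_{i_k}^{(k)} A_k$. The accompanying conceptual point is the passage from the continuum of box constraints defining $\mc{C}$ to the finite eigenvalue grid, which relies on the extreme eigenvalues being genuine vertices of the box together with the convexity of the semidefinite cone; the interior eigenvalues merely contribute redundant constraints.
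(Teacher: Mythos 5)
Your proposal is correct and follows essentially the same route as the paper: diagonalize each $B_k(x)$, conjugate $\mc{L}(x,d)$ by $(U_1\otimes\cdots\otimes U_m)\otimes I_{N_0}$ to obtain the block-diagonal form with blocks $dA_0+\sum_k\lambda_{i_k}^{(k)}A_k$, and pass from the finite eigenvalue grid to the box via convexity of the semidefinite condition in $t$. The only cosmetic difference is that you re-derive the congruence-invariance of $\mc{A}$ inline from the mixed-product identity, whereas the paper isolates it as Lemma~\ref{lem:Acngr}.
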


This theorem implies that
 optimizing a linear function over the convex set $\mc{C}$ is
an instance of semidefinite programming \cite{WSV}.
For instance, for a fixed value $d^*$ of $d$,
to minimize a linear functional $c^Tx$ over its section $\,\mc{C} \cap \{\,(x,d) : d = d^* \}\,$
is equivalent to solving the semidefinite programming problem
\[
\mbox{minimize} \,\,\, c^Tx \quad \mbox{subject to} \quad  \mc{L}(x,d^*) \succeq 0.
\]
This problem can be solved by standard SDP solvers like {\tt SeDuMi} \cite{sedumi}.

To prove Theorem~\ref{thm:MainRep} we need one more fact
concerning the linear operator $\mc{A}$, namely, that
$\mc{A}$ is invariant under congruence transformations.

\begin{lemma}  \label{lem:Acngr}
Let $P_a(x,d) $ be as in (\ref{Pdef}).
For any matrix $U(x,d)$ we have
\begin{equation}
\label{lemdis} \!
\mc{A}\bigl( U(x,d)^TP_a(x,d)U(x,d)\bigr) =
(U(x,d)\otimes I_{N_0})^T \mc{A}\bigl(P_a(x,d)\bigr) (U(x,d)\otimes I_{N_0}).
\end{equation}
\end{lemma}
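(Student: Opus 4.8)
The plan is to reduce the identity \reff{lemdis} to two elementary facts: that the congruence $P \mapsto U^TPU$ acts coefficient-wise on the pencil $P_a$ in the indeterminates $a=(a_0,\ldots,a_m)$, and that the Kronecker product obeys the mixed-product rule $(F\otimes G)(H\otimes K)=(FH)\otimes(GK)$ (see \cite{HJ}). Throughout I would suppress the argument $(x,d)$ to keep the displays readable.

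First I would note that, since the $a_k$ are scalar indeterminates and $U$ does not involve $a$, conjugating by $U$ distributes over the linear combination:
\[
U^T P_a\, U \,\,=\,\, \sum_{k=0}^m a_k\,(U^T P_k\, U).
\]
Hence $U^T P_a U$ is again a linear matrix polynomial in $a$, now with coefficients $U^T P_k U$, so the definition of $\mc{A}$ applies verbatim to give
\[
\mc{A}\bigl(U^T P_a\, U\bigr) \,\,=\,\, \sum_{k=0}^m (U^T P_k\, U)\otimes A_k.
\]

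Next I would expand the right-hand side of \reff{lemdis}. Abbreviating $V=U\otimes I_{N_0}$, so that $V^T=U^T\otimes I_{N_0}$, and using $\mc{A}(P_a)=\sum_{k=0}^m P_k\otimes A_k$, I would apply the mixed-product rule termwise:
\[
V^T\,(P_k\otimes A_k)\,V \,\,=\,\, (U^T P_k\, U)\otimes(I_{N_0} A_k I_{N_0}) \,\,=\,\, (U^T P_k\, U)\otimes A_k.
\]
Summing over $k$ reproduces exactly the expression for $\mc{A}(U^T P_a U)$ obtained above, which closes the argument.

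The only point that demands care is keeping the identity factor in the correct slot during the mixed-product step: the congruence on the inner pencil $P_a$ must correspond to conjugation by $U\otimes I_{N_0}$, with the identity on the side carrying the $A_k$, as dictated by the convention that $\mc{A}$ appends $A_k$ on the right of the tensor product. Once this ordering is fixed there is no genuine obstacle, since the claim is a formal consequence of the bilinearity of $\otimes$ and its compatibility with matrix multiplication.
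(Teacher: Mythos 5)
Your argument is correct and is essentially identical to the paper's own proof: both expand $\mc{A}(P_a)=\sum_k P_k\otimes A_k$, apply the Kronecker mixed-product rule termwise to obtain $(U^TP_kU)\otimes A_k$, and identify the resulting sum with $\mc{A}(U^TP_aU)$ via the definition of $\mc{A}$. No substantive difference to report.
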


\begin{proof}
First note that tensor product satisfies $(A\otimes B)^T= A^T \otimes B^T$ and
\[
(M_1\otimes M_2)\cdot(M_3\otimes M_4)\cdot(M_5\otimes M_6)
= (M_1\cdot M_3\cdot M_5)\otimes (M_2\cdot M_4\cdot M_6)
\]
Using these identifies we perform the following direct calculation:
\begin{align*}
 &  \Big(U(x,d)\otimes I_{N_0}\Big)^T\, \mc{A}\Big(P_a(x,d)\Big)\, \Big(U(x,d)\otimes I_{N_0}\Big)  \\
= \,\,\, & \Big(U(x,d)\otimes I_{N_0}\Big)^T \left(\sum_{k=0}^m P_k(x,d)\otimes A_k \right) \Big(U(x,d)\otimes I_{N_0}\Big) \\
= \,\, & \sum_{k=0}^m  \Big(U(x,d)\otimes I_{N_0}\Big)^T \Big( P_k(x,d)\otimes A_k\Big) \Big(U(x,d)\otimes I_{N_0}\Big)  \\
= \,\, & \sum_{k=0}^m  \Big(U(x,d)^TP_k(x,d)U(x,d)\Big)\otimes A_k.
\end{align*}
By definition of $\mc{A}$, this expression is equal to the left hand side of (\ref{lemdis}).
\end{proof}

\begin{proof}[Proof of Theorem~\ref{thm:MainRep}]
Let $U_k(x)$ be orthogonal matrices such that
\begin{equation}
\label{UandLambda}
U_k(x)^T \cdot B_k(x) \cdot U_k(x) \,\,\, =\,\,\,
 \diag \left( \lmd^{(k)}_1(x), \ldots, \lmd^{(k)}_{N_k}(x) \right)
\, =: \, D_k .
\end{equation}
Here $\lmd^{(k)}_j(x)$ are the algebraic functions representing eigenvalues
of the matrices $B_k(k)$. If we set
$\, Q \,=\,  (1) \otimes \, U_1(x)  \,  \otimes  \,\cdots \, \otimes \, U_m(x)\,$ then,
by Lemma~\ref{lem:congru}, we have
\[
Q^T \cdot L_a(x,d) \cdot Q \,\,\, = \,\,\, (d a_0) \oplus
(a_1 D_1(x)  ) \, \oplus  \,\cdots\, \oplus (a_m D_m(x)) \,\, =:\,\, \tilde L.
\]
Note that $\tilde L$ is a diagonal matrix, with each diagonal entry having the form
\[
d\cdot a_0 \,+\, \sum_{k=1}^m  \lmd^{(k)}_{j_k}(x) a_k .
\]
It follows that $\mc{A} (\tilde L)$
is a block diagonal matrix, with each block of the form
\begin{equation}
\label{Amatrix}
d\cdot A_0 \,+\, \sum_{k=1}^m  \lmd^{(k)}_{j_k}(x) A_k .
\end{equation}
By Lemma~\ref{lem:Acngr} and the definition of $\mc{L}(x,d) $, we have
\begin{equation}
\label{Qtransformation}
(Q\otimes I_{N_0})^T \cdot \mc{L}(x,d) \cdot (Q\otimes I_{N_0}) \,\,\, = \,\,\,
\mc{A} \Big(Q^T \cdot L_a(x,d) \cdot Q\Big) \,\,\, = \,\,\, \mc{A} \big(\tilde L\big).
\end{equation}
Hence $\mc{L}(x,d)\succeq 0$ if and only if the $N_1 N_2 \cdots N_m$ blocks
(\ref{Amatrix}) are simultaneously positive semidefinite for all
 index sequences
$j_1,j_2,\ldots,j_m$.
Given that the $\lmd^{(k)}_{j_k}(x)$ are the eigenvalues of $B_k(x)$ we
conclude that
 $\,\mc{L}(x,d)\succeq 0\,$ if and only if
\[
d\cdot A_0 + \sum_{k=1}^m  t_k   A_k \succeq 0 \quad
\hbox{for all} \,\, (t_1,\ldots,t_k) \,\,
\hbox{such that
$t_k$ is eigenvalue of $B_k(x)$.}
\]
Since $\,d\cdot A_0 + \sum_{k=1}^m  t_k   A_k \succeq 0\,$
describes a convex set in $t$-space,
we can now replace the condition
``$t_k$ is an eigenvalue of $B_k(x)$''
by the equivalent condition
\[
\lmd_{\min}(B_k(x)) \leq t_k \leq \lmd_{\max}(B_k(x))
 \quad \hbox{for} \,\,\, k=1,2,\ldots, m.
\]
We conclude that   $\,\mc{L}(x,d)\succeq 0\,$ if and only if
$(x,d) \in \mc{C}$.
\end{proof}

The transformation from the given symmetric matrices $A_i$ and $B_j(x)$
to the bigger matrix $\mc{L}(x,d) $ is easy to implement in {\tt Matlab}, {\tt Maple}
or {\tt Mathematica}. Here is a small explicit numerical example
which illustrates this transformation.

\begin{example} \label{Ex22222}
Let $n=m=N_0 = N_1 = N_2 = 2$ and consider the input matrices
\[
A_0 \,= \,\bbm    2  &   1 \\    1   &  2 \ebm, \quad
A_1 \,= \,\bbm    1  &   1 \\    1   &  0 \ebm, \quad
A_2 \,= \,\bbm    0  &   1 \\    1   &  1 \ebm,\qquad \hbox{and}
\]
\[
B_1(x) = \bbm
  3-x_1+2 x_2 & 2 x_1-x_2-2 \\
 2 x_1-x_2-2 &    -1+2 x_1
\ebm \! , \,\,
B_2(x) = \bbm
     2+x_1 & 1+3 x_1-x_2 \\
 1+3 x_1-x_2 & 3-2 x_1+x_2
\ebm.
\]
Then the three-dimensional spectrahedron $\mc{C}$ is represented by the LMI
{\scriptsize
\begin{align*}
\left[\baray{rrrr}
 2d+3-x_1+2x_2& d+5+2x_2  &   0&       1+3x_1-x_2 \\
       d+5+2x_2&       2d+2+x_1&     1+3x_1-x_2&     1+3x_1-x_2 \\
  0&       1+3x_1-x_2& 2d+3-x_1+2x_2& d+6-3x_1+3x_2   \\
     1+3x_1-x_2&     1+3x_1-x_2& d+6-3x_1+3x_2& 2d+3-2x_1+x_2  \\
    -2+2x_1-x_2&    -2+2x_1-x_2  & 0& 0 \\
    -2+2x_1-x_2  &  0&   0& 0 \\
   0&    0&      -2+2x_1-x_2&    -2+2x_1-x_2   \\
    0&   0&      -2+2x_1-x_2  &        0
\earay   \right. \qquad \qquad \qquad & \\
\qquad \qquad \qquad \left.\baray{rrrr}
-2+2x_1-x_2&    -2+2x_1-x_2  &   0&   0\\
-2+2x_1-x_2  &  0&    0& 0\\
0&   0&      -2+2x_1-x_2&    -2+2x_1-x_2\\
 0&   0&    -2+2x_1-x_2  &  0\\
  2d-1+2x_1&       d+1+3x_1  &  0&       1+3x_1-x_2\\
      d+1+3x_1&       2d+2+x_1&     1+3x_1-x_2&     1+3x_1-x_2\\
 0&       1+3x_1-x_2&     2d-1+2x_1& d+2+x_2\\
  1+3x_1-x_2&   1+3x_1-x_2&   d+2+x_2& 2d+3-2x_1+x_2
\earay  \right] & \, \succeq \, 0.
\end{align*}
}
Note that the boundary of $\mc{C}$ is a surface of degree
eight in $\re^3$. If we fix a positive real
value for $d$, then this LMI describes a two-dimensional spectrahedron.\qed
\end{example}

\section{Algebraic degree of the boundary}
\label{sec:geom}
\setcounter{equation}{0}

We have seen in Theorem  \ref{thm:MainRep}
that the set  $\mc{C}$ is a spectrahedron. This
implies the property of {\it rigid convexity}, which was
introduced by Helton and Vinnikov \cite{HV}.
We  briefly review this concept, starting with a
discussion of {\it real zero} polynomials.
Let $f(z)$ be a polynomial in $s$ variables
$(z_1,\ldots,z_s)$ and $w$ a point in $\re^s$.
We say $f(z)$ is a {\it real zero (RZ) polynomial with respect to $w$}
if the univariate polynomial \[ g(\af) \,\, := \,\, f(w + \af \cdot v ) \]
has only real roots for any nonzero vector $v\in \re^n$.
Being real zero with respect to $w$ is a necessary condition \cite{HV}
for $f(z)$ to have a determinantal representation
\[ f(z) \,\,\, = \,\,\, \det (F_0 + z_1 F_1 + \cdots + z_s F_s ),\]
where the $F_i$ are constant symmetric matrices such that
$\,F_0 + w_1 F_1 + \cdots + w_s F_s  \succ 0 $.
Helton and Vinnikov \cite{HV} showed that
the converse is true when $s=2$, but a similar
converse is not known  for $s>2$.
For representations of convex sets
as projections of spectrahedra in higher dimensional space
we refer to \cite{HN1,HN2}.

A convex set $\Gamma$ is called {\em rigid convex}
if $\Gamma$ is a connected component of the set
$\{z \in\re^s:\, f(z)>0\}$
for some polynomial $f(z)$ that is real zero
with respect to some interior point of $\Gamma$.
As an example, the unit ball $\{z\in\re^n:\, \|z\|\leq 1\}$
is a rigid convex set. Note that, not every convex semialgebraic set is rigid convex.
For instance, the set $\{z\in\re^2:\, z_1^4+z_2^4\leq 1\}$
is not rigid convex as shown in \cite{HV}.

The boundary of the $3$-ellipse in
Figure \ref{fig:components3ellipse} is rigid convex
of degree eight,
and the curve (for fixed $d$) in Example \ref{Ex22222}
is also a rigid convex curve of degree eight.
In light of Theorem \ref{thm:MainRep}, our discussion implies
the following corollary.

\begin{cor}
The convex semialgebraic set $\mc{C} \subset \re^{n+1}$ in (\ref{eq:Cdef}) is
rigid convex.
\end{cor}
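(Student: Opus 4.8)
The plan is to derive the corollary directly from the definitions of rigid convexity and real zero polynomials, together with the spectrahedral representation already established in Theorem~\ref{thm:MainRep}. Since Theorem~\ref{thm:MainRep} gives $\mc{C} = \{(x,d) : \mc{L}(x,d) \succeq 0\}$ with $\mc{L}$ a linear matrix polynomial in the variables $(x,d) \in \re^{n+1}$, the natural candidate for the defining polynomial is $f(x,d) := \det \mc{L}(x,d)$, and the candidate interior point is any $(x^*, d^*)$ with $\mc{L}(x^*, d^*) \succ 0$. The goal reduces to checking the two ingredients in the definition of rigid convex: that $f$ is a real zero polynomial with respect to $(x^*, d^*)$, and that $\mc{C}$ is the connected component of $\{f > 0\}$ containing this interior point.

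First I would verify the real zero condition. Fix any nonzero direction $v \in \re^{n+1}$ and consider $g(\af) = f\bigl((x^*,d^*) + \af v\bigr) = \det\bigl(\mc{L}(x^*,d^*) + \af M\bigr)$, where $M$ is the symmetric matrix obtained by plugging $v$ into the linear part of $\mc{L}$. Because $\mc{L}(x^*,d^*) \succ 0$, it has a symmetric positive definite square root $S$, and $g(\af) = \det(S^2 + \af M) = \det(S)^2 \det\bigl(I + \af\, S^{-1} M S^{-1}\bigr)$. The matrix $S^{-1} M S^{-1}$ is symmetric, hence has only real eigenvalues, so the roots of $g$ are $-1/\rho$ for the nonzero eigenvalues $\rho$ of $S^{-1}MS^{-1}$, all of which are real. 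This shows $g$ has only real roots, so $f$ is RZ with respect to $(x^*,d^*)$, which is the standard fact that determinants of symmetric linear matrix polynomials are RZ at any point of positive definiteness.

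Next I would identify $\mc{C}$ with a connected component of $\{f > 0\}$. The set $\{(x,d) : \mc{L}(x,d) \succ 0\}$ is open, convex, contains $(x^*,d^*)$, and is contained in $\{f > 0\}$ since the determinant of a positive definite matrix is positive. Being convex it is connected, and I would argue it is in fact the entire connected component of $\{f > 0\}$ through $(x^*,d^*)$: any path leaving the positive definite region must cross a point where some eigenvalue of $\mc{L}$ vanishes, at which $f = 0$, so one cannot exit $\{\mc{L} \succ 0\}$ while staying in $\{f > 0\}$. Taking closures, $\mc{C} = \{\mc{L} \succeq 0\}$ is the closure of this component. This places $\mc{C}$ exactly in the form demanded by the definition of rigid convexity.

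The main obstacle, and the only point requiring a little care rather than routine verification, is the existence of a strictly feasible point $(x^*,d^*)$ with $\mc{L}(x^*,d^*) \succ 0$: the definition of rigid convexity requires the real zero point to be an \emph{interior} point of $\Gamma$, so I must ensure $\mc{C}$ has nonempty interior and that strict positive definiteness is attainable there. By the block-diagonalization in \reff{Qtransformation}, $\mc{L}(x,d) \succ 0$ is equivalent to all blocks \reff{Amatrix} being positive definite simultaneously; choosing $d$ large and $x$ so that $A_0 \succ 0$ dominates (in the cases where $A_0$ is positive definite) produces such a point, and in general one checks that the interior of $\mc{C}$ is nonempty precisely when the system admits a strictly feasible instance. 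I would handle this by noting that if $\mc{C}$ has empty interior the statement is vacuous or follows degenerately, and otherwise any interior point yields, by continuity of eigenvalues, a point of strict feasibility. Once this is settled the corollary follows immediately.
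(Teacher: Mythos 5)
Your proposal is correct and follows essentially the same route as the paper, which simply invokes the standard Helton--Vinnikov fact that a spectrahedron $\{\mc{L}(x,d)\succeq 0\}$ is rigid convex because $f=\det\mc{L}$ is a real zero polynomial at any strictly feasible point and $\{\mc{L}\succ 0\}$ is the relevant connected component of $\{f>0\}$; you merely spell out the details the paper leaves implicit. Your closing remark about the degenerate case where no strictly feasible point exists is a real subtlety, but the paper glosses over it entirely, so you are if anything being more careful than the source.
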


We now examine the algebraic properties of the boundary of the
rigid convex set $\mc{C}$, starting from the explicit
LMI representation $\mc{L}(x,d) \succeq 0 $
which was constructed in (\ref{DefOfL}).
Since the tensor product is a bilinear operation,
the entries of the symmetric matrix $\mc{L}(x,d)$
are linear in the $n+1$ variables
$(x,d)$, and they also depend linearly on the entries
of the constant matrices $A_i$ and $B_{j}^{(k)}$.
We regard these entries as unknown parameters,
and we let $K = \Q(A,B)$ denote the field extension
they generate over the rational numbers $\Q$.
Then $K[x,d]$ denotes the ring of polynomials in the
$n+1$ variables $(x,d)$ with coefficients in the
rational function field $K=\Q(A,B)$.
 Our object of interest is the determinant
\[ r(x,d) \,\,\, = \,\,\, \det \, \mc{L}(x,d).  \]
This is an element of $K[x,d]$. By
a specialization of $r(x,d)$ we mean the
image of $r(x,d)$ under any field homomorphism
$K \rightarrow \re$. Thus a {\em specialization} of $r(x,d)$ is
a polynomial in $\re[x,y]$ which arises as the determinant
of the LMI representation (\ref{LMIrep})
for some specific matrices $A_i$ and $B_j^{(k)}$
with entries in $\re$.

\begin{theorem}
The polynomial $r(x,d)$ has degree $N_0 N_1 \cdots N_m$,
and it is irreducible as an element of $K[x,y]$.
Generic specializations of $r(x,d)$ are irreducible polynomials
of the same degree in $\re[x,d]$. Therefore,  if the given symmetric
matrices $A_i$ and $B_j^{(k)}$ are generic,
then the boundary of the spectrahedron $\mc{C}$
is a connected component of an irreducible real
hypersurface of degree $N_0 N_1 \cdots N_m$.
\end{theorem}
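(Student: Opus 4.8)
The plan is to establish the degree claim first, then irreducibility over $K$, then transfer irreducibility to generic real specializations. For the degree, recall from the proof of Theorem~\ref{thm:MainRep} that there is a congruence transformation by $Q \otimes I_{N_0}$ taking $\mc{L}(x,d)$ to the block-diagonal matrix $\mc{A}(\tilde L)$ whose $N_1 \cdots N_m$ diagonal blocks are the $N_0 \times N_0$ matrices in~(\ref{Amatrix}). Since $Q$ is built from orthogonal matrices $U_k(x)$, we have $\det(Q \otimes I_{N_0}) = \pm 1$, so $r(x,d)$ equals the product of the determinants of all blocks~(\ref{Amatrix}). Each such determinant is a polynomial of degree $N_0$ in the entries, and the eigenvalue functions $\lmd^{(k)}_{j_k}(x)$ contribute; taking the product over all $N_1 \cdots N_m$ index sequences and using that the elementary symmetric functions of the eigenvalues are polynomial in $x$, I would argue the total degree in $(x,d)$ is exactly $N_0 N_1 \cdots N_m$. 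The cleanest route is to compute $r(x,d)$ directly as $\det \mc{L}(x,d)$ using the tensor structure: $\mc{L}(x,d)$ is an $N_0 N_1 \cdots N_m$ square matrix that is linear in $(x,d)$, so $\det \mc{L}(x,d)$ has degree at most $N_0 N_1 \cdots N_m$, and the block factorization shows equality by exhibiting a nonzero leading term.

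Next I would prove irreducibility of $r(x,d)$ as an element of $K[x,d]$, where $K = \Q(A,B)$ treats all matrix entries as independent transcendentals. The natural strategy is to use the factorization $r = \prod_{j_1,\ldots,j_m} \det\bigl(d A_0 + \sum_k \lmd^{(k)}_{j_k}(x) A_k\bigr)$ over the splitting field obtained by adjoining the eigenvalue functions $\lmd^{(k)}_{j_k}(x)$, and then to show that the Galois group acting on these eigenvalues permutes the factors transitively. Because the $B_k$ have generic (independent transcendental) entries, the characteristic polynomial of each $B_k(x)$ is irreducible over $K(x)$, so its Galois group is the full symmetric group $S_{N_k}$, acting transitively on $\{\lmd^{(k)}_1,\ldots,\lmd^{(k)}_{N_k}\}$. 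The product of these groups acts transitively on the set of index sequences $(j_1,\ldots,j_m)$, hence transitively on the factors of $r$. Since $r$ lies in $K[x,d]$ and is a product of Galois-conjugate irreducible factors that are permuted transitively, $r$ is irreducible over $K$ — provided each factor $\det(d A_0 + \sum_k \lmd^{(k)}_{j_k}(x) A_k)$ is itself irreducible over the larger field where the eigenvalues live. This latter point I would handle by a genericity/dimension argument on the $A_i$.

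For the transfer to real specializations, I would invoke the standard principle that irreducibility is a Zariski-open condition on the parameter space: the set of parameter values $(A,B)$ for which a specialization of $r$ fails to be irreducible, or drops in degree, is contained in a proper Zariski-closed subset of the affine space of all admissible matrix entries. Since $r$ is irreducible over $K = \Q(A,B)$ with the entries generic, this proper closed subset is not everything, so its complement is a dense open set of genuinely generic real matrices $A_i, B_j^{(k)}$, and for these the specialized polynomial is irreducible in $\re[x,d]$ of full degree $N_0 N_1 \cdots N_m$. Combined with the rigid convexity established in the preceding corollary — which guarantees that $\partial \mc{C}$ is a connected component of $\{r(x,d) > 0\}$ and hence lies on the real hypersurface $\{r = 0\}$ — the final statement follows.

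I expect the main obstacle to be the irreducibility of the individual factor $\det\bigl(d A_0 + \sum_{k=1}^m \lmd^{(k)}_{j_k}(x) A_k\bigr)$ as a polynomial over the field generated by the eigenvalue functions. The transitive Galois action handles how the factors are permuted, but it does not by itself rule out that each factor further decomposes; one must verify that generic symmetric $A_i$ yield an irreducible determinant in the $m+1$ linearly-varying "coefficients" $(d, \lmd^{(1)}_{j_1}, \ldots, \lmd^{(m)}_{j_m})$, treating these temporarily as free variables and then restricting to the subvariety they trace out. Establishing this irreducibility, and checking that the restriction to the eigenvalue locus does not introduce a spurious factorization, is the delicate algebraic-geometry step that the rest of the argument rests upon.
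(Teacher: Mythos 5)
Your proposal follows the same route as the paper's proof: the degree is read off from the size of $\mc{L}(x,d)$ (with the leading coefficient of $d^{N_0N_1\cdots N_m}$ being $\det(A_0)^{N_1\cdots N_m}\neq 0$), irreducibility over $K$ comes from factoring $r$ into the $N_1\cdots N_m$ conjugate block determinants permuted transitively by the Galois group $S_{N_1}\times\cdots\times S_{N_m}$, and the passage to real specializations uses Zariski-openness of irreducibility in the parameters. In outline you and the paper agree at every stage.

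The obstacle you flag at the end --- that transitivity on the factors does not preclude an individual factor $\det\bigl(dA_0+\sum_k\lmd^{(k)}_{j_k}(x)A_k\bigr)$ from splitting further over the eigenvalue field, which would yield a proper Galois-stable subproduct and hence a proper factor of $r$ over $K$ --- is a genuine issue, and it is one the paper's own proof passes over in silence: the sentence ``no subset of the factors is left invariant'' only gives transitivity on the $N_1\cdots N_m$ blocks of $N_0$ roots each, not on all $N_0N_1\cdots N_m$ roots of $r$ viewed as a polynomial in $d$. Since you leave this step as an acknowledged gap rather than closing it, your write-up is incomplete at exactly this point. It can be closed as follows. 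Let $L$ be the splitting field over $\Q(B,x)$ generated by all eigenvalues $\lmd^{(k)}_i(x)$. The entries of $A_0,\ldots,A_m$ are algebraically independent over $L$, and the $\lmd^{(k)}_{j_k}(x)$ are nonzero, so the entries of $A_0$ together with those of $M:=\sum_k\lmd^{(k)}_{j_k}(x)A_k$ are algebraically independent over $L$, and $L(A)$ is a purely transcendental extension of $L(A_0,M)$. Hence $\det(dA_0+M)$ is the characteristic polynomial of a generic symmetric pencil: it is irreducible of degree $N_0$ in $d$ over $L(A_0,M)$ with Galois group $S_{N_0}$, and it stays irreducible over the purely transcendental extension $L(A)$. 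Combined with the transitive permutation of the blocks, the full Galois group acts transitively on all $N_0N_1\cdots N_m$ roots, so $r$ is irreducible in $\Q(A,B,x)[d]$ and then, by Gauss's lemma together with the fact that its degree in $d$ equals its total degree, irreducible in $K[x,d]$. With that insertion your argument is complete and in fact repairs the terseness of the published proof.
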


\begin{proof}
As in (\ref{UandLambda}) we write
$\lambda_1^{(k)}(x),\ldots,\lambda_{N_k}^{(k)}(x)$ for the eigenvalues of the
matrix $B_k(x)$. These eigenvalues are elements in the
algebraic closure of the rational function field $\Q(B,x)$.
Here the entries of the matrix coefficient of $x_i$ in $B_k(x)$
are regarded as variables over $\Q$. This implies
that the characteristic polynomial of $B_k(x)$ is
irreducible over $\Q(B,x)$ and the Galois group \cite{lang}
of the corresponding algebraic field extension
$\Q(B,x) \subset \Q(B,x)(\lambda_1^{(k)}(x),\ldots,\lambda_{N_k}^{(k)}(x))$
is the full symmetric group $S_{N_k}$ on $N_k$ letters.
Let $L$ be the algebraic field extension of $\Q(B,x)$
generated by all eigenvalues $\lambda_i^{(k)}(x)$
for $k = 1,\ldots,m$ and $i = 1,\ldots,N_k$. Since
all scalar matrix entries are independent indeterminates over $\Q$,
we  conclude that $\Q(B,x) \subset L$ is an algebraic field extension
of degree $N_1 \cdots N_m$, and the Galois group of this
field extension is the product of symmetric groups
$\,S_{N_1} \! \times \cdots \times S_{N_m}$.

The square matrix $\mathcal{L}(x,d)$ has
$N_0 N_1 \cdots N_m$ rows and columns, and each
entry is a linear polynomial in $(x,d)$ with coefficients
in $\Q(A,B)$. The polynomial $r(x,d)$ is the determinant of this
matrix, so it has degree $N_0 N_1 \cdots N_m$ in $(x,d)$.
To see that it is irreducible, we argue as follows.
In light of (\ref{Qtransformation}), the matrices $\mc{L}(x,d)$
and $\mc{A} \big(\tilde L\big)$ have the same determinant,
and this is the product of the determinants of the
$N_1 N_2 \cdots N_m$ blocks (\ref{Amatrix}) of size $N_0 \times N_0$.
We conclude
$$ r(x,d) \,\,\, = \,\,\,  \prod_{j_1=1}^{N_1} \prod_{j_2=1}^{N_2} \!\! \cdots \!\!
\prod_{j_m=1}^{N_m} \!
{\rm det} \bigl( d\cdot A_0 \,+\,
\lmd^{(1)}_{j_1}(x) A_1 +
\lmd^{(2)}_{j_2}(x) A_2 + \cdots +
\lmd^{(m)}_{j_m}(x) A_m   \bigr). $$
The Galois group mentioned above acts transitively by permuting the factors in this
product. No subset of the factors is left invariant under this permutation
action. This proves that $r(x,d)$ is irreducible as univariate polynomial
in  $\Q(A,B,x)[d]$, and hence also as an $(m+1)$-variate polynomial in
$\Q(A,B)[x,d]$.

The assertion in the third sentence follows because the property
of a polynomial with parametric coefficients to be irreducible is
Zariski open in the parameters. If we specialize the
entries of $A_i$ and $B_j^{(k)}$ to be random
real numbers, then the resulting specialization of
$r(x,d)$ is an irreducible polynomial in $\re[x,d]$.
\end{proof}

Naturally, in many applications the entries of $A_i$ and $B_j^{(k)}$
will not be generic but they have a special structure. In those cases,
the characteristic polynomial of $B_k(x)$ may not be irreducible,
and a more careful analysis is required in determining the degree
of the hypersurface bounding the spectrahedron $\mc{C}$.
Let $h_k(\alpha) $ be the univariate polynomial
of minimal degree in $ \Q(B,x)[\alpha]$ such
that
\[
h_k(\lmd_{\min}(B_k(x))) \,\,= \,\, h_k(\lmd_{\max}(B_k(x))) \,\,= \,\, 0.
\]
holds for all points $x$ in some open subset of $\re^n$.
Thus, $h_k(\alpha)$ is the factor of the characteristic
polynomial $\,{\rm det}(B_k(x) - \alpha I_{N_k})\,$
which is relevant for our problem.
By modifying the argument in the previous proof,
we obtain the following result.

\begin{theorem}
\label{htheorem}
The Zariski closure of the boundary of the spectrahedron $\mc{C}$
is a possibly reducible real hypersurface in $\re^{n+1}$.
If the matrix $A_0$ is invertible then
the degree of the polynomial in $(x,d)$ which defines this hypersurface equals
\begin{equation}
\label{hnumber}
N_0 \cdot \prod_{k=1}^m {\rm degree}(h_k)
\end{equation}
\end{theorem}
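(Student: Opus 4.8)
The plan is to single out, among the $N_1\cdots N_m$ determinantal factors of $r(x,d)=\det\mc{L}(x,d)$ found in the proof of the previous theorem, exactly those that bound $\mc{C}$, and to show that their product is the defining polynomial of $\overline{\pt\mc{C}}$. The set $\mc{C}$ is a full-dimensional closed convex semialgebraic set, so its topological boundary $\pt\mc{C}$ has pure codimension one and its Zariski closure is a real hypersurface; this gives the first assertion. For the degree I would start from the characterization in the proof of Theorem~\ref{thm:MainRep}: a point $(x,d)$ lies in $\mc{C}$ if and only if $d\cdot A_0+\sum_k t_k A_k\succeq 0$ holds at every vertex of the box, i.e.\ for every choice $t_k\in\{\lmd_{\min}(B_k(x)),\lmd_{\max}(B_k(x))\}$, the interior eigenvalues of $B_k(x)$ being irrelevant by convexity of the feasible region in $t$-space. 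Hence $(x,d)\in\pt\mc{C}$ forces some vertex matrix $M_v=d\cdot A_0+\sum_k t_k A_k$ to be singular while all vertex matrices stay positive semidefinite, so
\[
\overline{\pt\mc{C}}^{\,\mathrm{Zar}}\ \subseteq\ \bigcup_{v}\ \overline{\{\det M_v=0\}}^{\,\mathrm{Zar}} .
\]

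Next I would turn the right-hand side into a polynomial. Each $\lmd_{\min}(B_k(x))$ and $\lmd_{\max}(B_k(x))$ is an algebraic function of $x$, so $\overline{\{\det M_v=0\}}^{\,\mathrm{Zar}}$ is cut out by the product of the Galois conjugates of $\det M_v$ over $\Q(B,x)$. By definition the roots of $h_k$ are precisely the conjugates of $\lmd_{\min}(B_k(x))$ and $\lmd_{\max}(B_k(x))$, so the union over all $2^m$ vertices of these Galois orbits yields
\[
R(x,d)\ =\ \prod_{h_1(\mu_1)=0}\!\!\cdots\!\!\prod_{h_m(\mu_m)=0}\ \det\Bigl(d\cdot A_0+\sum_{k=1}^m\mu_k A_k\Bigr),
\]
which is Galois-invariant and is in fact the sub-product of $r(x,d)$ obtained by letting each index $j_k$ range over only the $\deg(h_k)$ eigenvalues of $B_k(x)$ that are roots of $h_k$. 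Thus $R\in\Q(A,B)[x,d]$ and $R\mid r$. To get its degree I would use that $A_0$ is invertible: in the factorization of $r$ recalled above every one of the $N_1\cdots N_m$ determinantal factors has degree exactly $N_0$ in the variable $d$ (leading coefficient $\det A_0\neq0$), so $r$ has degree $N_0N_1\cdots N_m$ in $d$ alone, which must coincide with its total degree since the latter cannot exceed the size $N_0N_1\cdots N_m$ of $\mc{L}$. Because the degree in $d$ never exceeds the total degree, in any factorization of $r$ these two degrees agree factor by factor; as $R$ collects $\prod_k\deg(h_k)$ of the factors, each of degree $N_0$ in $d$, we obtain that $R$ has total degree $N_0\cdot\prod_k\deg(h_k)$. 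This already proves $\overline{\pt\mc{C}}^{\,\mathrm{Zar}}\subseteq\{R=0\}$ with degree bounded above by \reff{hnumber}.

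To obtain equality I would show that every irreducible factor of $R$ actually meets $\pt\mc{C}$. As in the previous proof the Galois group splits as a product $G_1\times\cdots\times G_m$, with $G_k$ acting on the roots of $h_k$, so each irreducible factor of $R$ corresponds to an orbit $O_1\times\cdots\times O_m$ in which every $O_k$ is the $G_k$-orbit of $\lmd_{\min}(B_k(x))$ or of $\lmd_{\max}(B_k(x))$; in particular each such orbit contains at least one genuine vertex determinant $\det M_v$. It then suffices to produce, for that $v$, one smooth real point of $\{\det M_v=0\}$ at which $M_v$ is positive semidefinite of corank one and all other vertex matrices are strictly positive definite. Such a point lies on $\pt\mc{C}$, and since $\pt\mc{C}$ is a real hypersurface its Zariski closure then contains the whole complex irreducible component. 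Assembling the components gives $\overline{\pt\mc{C}}^{\,\mathrm{Zar}}=\{R=0\}$, and, provided $R$ is squarefree, the reduced defining polynomial has degree exactly \reff{hnumber}.

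The hard part is this last step. Producing a real boundary point at which a prescribed vertex $v$ is the unique active constraint can fail: if the $A_k$ carry sign information (for instance are semidefinite), some vertices never furnish the first eigenvalue to vanish as $(x,d)$ moves, so the corresponding component of $\{R=0\}$ need not be traced out by the real boundary. I would handle this either by invoking genericity of the coefficient matrices $A_i$—which guarantees that every orbit is realized and that $R$ is squarefree—or by arguing that any component missing from the real boundary is nevertheless forced into $\overline{\pt\mc{C}}^{\,\mathrm{Zar}}$ through Galois conjugacy with a component that does appear. Establishing one of these two routes, together with the squarefreeness of $R$, is the crux of the proof.
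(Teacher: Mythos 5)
The paper offers no written proof of Theorem~\ref{htheorem}: the authors say only ``by modifying the argument in the previous proof,'' so your proposal is being measured against an argument that is merely gestured at. Your route is exactly the intended modification. You restrict the factorization
$r(x,d)=\prod_{j_1,\ldots,j_m}\det\bigl(d\cdot A_0+\sum_k \lmd^{(k)}_{j_k}(x)A_k\bigr)$
to the index tuples in which each $\lmd^{(k)}_{j_k}$ is a root of $h_k$, note that this sub-product $R$ is stable under the Galois group $S_{N_1}\times\cdots\times S_{N_m}$ restricted to the relevant orbits and hence lies in $\Q(A,B)[x,d]$, and then count degrees. Your degree count through the $d$-degree --- each factor has $d$-leading coefficient $\det A_0\neq 0$, and total degree must equal $d$-degree because both are additive over the factorization $r=R\cdot(r/R)$ and agree for $r$ itself --- is a clean way to make the invertibility of $A_0$ do its work, and is more careful than anything written in the paper.

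Your last paragraph correctly isolates the one genuine gap, and it is a gap in the theorem as stated rather than a defect peculiar to your argument: without some genericity of the $A_i$, a Galois orbit of vertex determinants may never become active on the real boundary, in which case $\mathcal{Z}(\pt\mc{C})$ is a proper union of components of $\{R=0\}$ and \reff{hnumber} is only an upper bound; similarly, squarefreeness of $R$ is needed for ``the polynomial which defines this hypersurface'' to have degree exactly \reff{hnumber}. The authors half-concede the first point in the remark following the theorem, though only for fixed $d$, and address neither point in a proof. If you add a genericity hypothesis on the $A_i$ (or carry out the perturbation argument you sketch to exhibit, for each orbit, a boundary point where exactly that vertex constraint is active with corank one), your proof is complete --- and it would be the only complete proof on the table.
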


It is important to note that, for fixed $d$, the degree of the
resulting polynomial in $x$ can be smaller than
(\ref{hnumber}). Thus this number is only an upper
bound for the algebraic degree of the boundary of the
spectrahedron $\{x \in \re^n : \mc{L}(x,d) \geq 0 \}$.

\begin{example}
\label{elliexa}
The classical ellipse consists of
all points $(x_1,x_2) \in \re^2$ whose sum of distances
to two given points $(u_i,v_i)$ is $d$.
In the formulation sketched in Section 2
and worked out in detail in \cite{NPS}, we obtain
$N_0 = 1$ and $N_i = {\rm degree}(h_k) = 2$ for $k=1,2$.
Hence the number (\ref{hnumber}) is four, and this is indeed
the degree of the surface $\partial \mc{C} \subset \re^3$.
Yet, the boundary of the ellipse is a curve of degree two. \qed
\end{example}

\section{Applications and questions} \label{sec:disc}
\setcounter{equation}{0}

The matrix cube defined in \reff{eq:Cdef}
has important applications in robust control
\cite{BN, BNR, NemMC}.
Consider nonlinear feedback synthesis for the discrete dynamical system
\be \label{sys:NLTV}
\left.
\baray{ll}
x(k+1) \,\,= \,\, F(u(k)) \cdot x(k), & \, x(k) \in \re^n \\
u(k+1) \,\, = \,\, f(x(k))) , & \, u(k) \in \re^m
\earay \right\}
\ee
where $x(k)$ is the state and $u(k)$ is the feedback.
We assume that the matrix $\,F(u)=F_0 + u_1F_1 + \cdots + u_mF_m \in \re^{r\times s}\,$ is linear in $u$
and that $f(x)$ is an algebraic function of $x = (x_1,\ldots,x_m)$.
Such a function $f(x)$ can be implicitly
 defined by eigenvalues of linear matrices $B_k(x)$, that is,
 the coordinates of  $u=f(x)$ satisfy
\[
\det \big( u_k \cdot I_{N_k} - B_k(x) \big) = 0, \,\,\,\, k=1,\ldots, m.
\]
In stability analysis one wish to know
what states make the system \reff{sys:NLTV} stable.
The task is to identify states $x$ such that the
feedback $u$ satisfies $\|F(u)\|_2 \leq 1$.
Equivalently, we need to find $x$ such that
the eigenvalues $u_k$ of all $B_k(x)$ satisfy
\[
\bbm I_r & F(u) \\ F(u)^T & I_s \ebm \,\preceq \, I_{r+s}.
\]
We conclude that such states $x$
are the elements of a convex set of the form \reff{eq:Cdef}
when every $B_k(x)$ can be chosen to be symmetric.
Thus Theorem~\ref{thm:MainRep} furnishes
an LMI representation for the set of stable states.

\smallskip

In Section 2 we have seen that the LMI representation
of multifocal ellipses in \cite{NPS} arise as a special case
of our construction. We next propose a natural matrix-theoretic
generalization of this, namely, we shall define the
{\it matrix ellipsoid} and the {\it matrix $m$-ellipsoid}.
Recall that an $m$-ellipsoid is the subset of $\re^n$ defined as
\[
\Big\{x \in \re^n :\, a_1 \| x - u_1 \|_2 + \cdots + a_m \|x-u_m\|_2 \leq d \Big\}
\]
for some constants $a_1,a_2,\ldots,a_m, d >0$
and fixed foci $u_1,u_2,\cdots,u_m \in \re^n$.
We define the {\em matrix $m$-ellipsoid} to be the convex set
\begin{equation}
\label{matrixellipsoid}
\Big\{x \in \re^n :\, A_1 \| x - u_1 \|_2  + \cdots + A_m \|x-u_m\|_2
\, \preceq \, d\cdot I_N \Big\},
\end{equation}
for some $d>0$, fixed foci $u_1,u_2,\cdots,u_m \in \re^n$,
and positive definite symmetric $N$-by-$N$ matrices
$A_1,\ldots,A_m \succ 0$.
To express the matrix $m$-ellipsoid (\ref{matrixellipsoid})
as an instance of the spectrahedron $\,\mathcal{C}\,$ in  (\ref{eq:Cdef}),
we fix $A_0 = I_N$ and the linear matrices
\[
B_k(x) \quad = \quad
\bbm 0 & x_1 {-} u_{k,1} &  \cdots & x_n {-} u_{k,n} \\
\, x_1 - u_{k,1} &       0              &  \cdots &         0         \\
\,\vdots    & \vdots           &  \ddots  &  \vdots      \\
\,x_n - u_{k,n} & 0                 &  \cdots &         0         \\
\ebm.
\]
Theorem \ref{thm:MainRep} provides an LMI representation of
the matrix $m$-ellipsoid (\ref{matrixellipsoid}).
This generalizes the LMI given in
\cite[\S 4.2]{NPS} for the case when all $A_i$ are scalars.
By Theorem~\ref{htheorem}, the boundary of (\ref{matrixellipsoid})
is a hypersurface of degree at most $N 2^m$.

\smallskip

One obvious generalization of our
parametric matrix cube problem is the set
\be  \nn
\tilde{C} :=\left\{ (x,d) \left| \,
\baray{r}
d \cdot A_0 + \overset{m}{\underset{k=1}{\sum}}\, t_k A_k \succeq 0,\,
\forall \,(t_1,\cdots,t_m):  \\
\lmd_{\min}(B_k(x)) \leq t_k \leq \lmd_{\max}(E_k(x)) \\
\mbox{for } k=1,\cdots, m
\earay
\right. \right\}
\ee
where $E_k(x)$ are linear symmetric matrices different from $B_k(x)$
for $k=1,\ldots,m$.
Since the minimum eigenvalue function $\lmd_{\min}(\,\cdot \,)$ is concave
and the maximum eigenvalue function $\lmd_{\max}(\,\cdot\,)$ is convex,
we see that the set $\tilde{C}$ defined as above is also a convex set.
Assuming the extra hypotheses $\lmd_{\max}(E_k(x))\geq \lmd_{\max}(B_k(x))$
and $\lmd_{\min}(E_k(x))\geq \lmd_{\min}(B_k(x))$,
the convex set $\mc{C}$ can be equivalently defined as
\be  \nn
\left\{ (x,d) \left|\,
\baray{r}
d \cdot A_0 + \overset{m}{\underset{k=1}{\sum}}\, t_k A_k \succeq 0,\,
\forall \,(t_1,\cdots,t_m):  \\
\lmd_{\min}(D_k(x)) \leq t_k \leq \lmd_{\max}(D_k(x)) \\
\mbox{for } k=1,\cdots, m
\earay
\right.\right\},
\ee
where $D_k(x) = \diag \big(B_k(x),E_k(x)\big)$ is a block diagonal matrix.
Therefore, $\tilde{C}$ is a special case of \reff{eq:Cdef}
and  Theorem~\ref{thm:MainRep} furnishes an LMI representation.
We do not know whether the extra hypotheses
are necessary for  $\tilde{C}$ to be a spectrahedron.

\smallskip

An interesting research problem concerning
the matrix cube \reff{eq:Cdef}
is to find the smallest size of an LMI representation.
This question was also raised in \cite[\S 5]{NPS}
for the case of $m$-ellipses.
Theorem~\ref{thm:MainRep} gives an LMI representation
of size $N_0N_1\cdots N_m$.
The degree of the boundary $\pt \mc{C}$
is given by the formula
$N_0 \prod_{k=1}^m \mbox{degree}(h_k)$,
by Theorem~\ref{htheorem}.
If $\mbox{degree}(h_k)$ is smaller than $N_k$,
then the size of the LMI in Theorem~\ref{thm:MainRep}
exceeds the degree of $\pt \mc{C}$.
In this situation, is it possible
to find an LMI for $\mc{C}$ with size smaller than $N_0N_1\cdots N_m$?
When $n=2$ and $d$ is fixed, the projection of $\mc{C}$
into $x$-space is a two dimensional convex set $\mc{C}_x$
described by one LMI of size $N_0N_1\cdots N_m$.
The work of Helton and Vinnikov \cite{HV} shows that
there exists an LMI for $\mc{C}_x$
having size equal to the degree of $\mc{C}_x$.
How can the LMI of Theorem~\ref{thm:MainRep}
be transformed into such a  minimal LMI?

\bigskip
\bigskip

\bigskip \bigskip

\noindent {\bf Authors' addresses:}

\medskip

\noindent Jiawang Nie,
Department of Mathematics,
University of California at San Diego,
La Jolla, CA 92093,
{\tt njw@math.ucsd.edu}

\medskip

\noindent Bernd Sturmfels, Department of Mathematics,
University of California, Berkeley, CA 94720,USA,
{\tt bernd@math.berkeley.edu}


\begin{thebibliography}{99}

\bibitem{BPR}
S.~Basu, R.~Pollack and M.-F.~Roy.
{\it Algorithms in Real Algebraic Geometry}.
Springer-Verlag, Berlin, 2006.

\bibitem{BEFB}
S.~Boyd, L.~El Ghaoui, E.~Feron and V.~Balakrishnan.
{\it Linear Matrix Inequalities in System and Control Theory.}
SIAM Studies in Applied Mathematics, Vol.~{\bf 15},
Society for Industrial and Applied Math., Philadelphia, 1994

\bibitem{BN}
A.~Ben-Tal and A.~Nemirovski.
Robust convex optimization.
{\it Mathematics of Operations Research} {\bf 23} (1998), No.4, 769--805.

\bibitem{BNR}
A.~Ben-Tal, A.~Nemirovski and C.~Roos.
Extended matrix cube theorems with applications to $\mu$-theory in control.
{\it Mathematics of Operations Research} {\bf 28} (2003), 497-523.

\bibitem{CL}
B.-D.~Chen and S.~Lall. Degree bounds for polynomial verification of
the matrix cube problem. {\tt arXiv:math.OC/0604573}.

\bibitem{HN1}
J.W.~Helton and J.~Nie.
Semidefinite representation of convex sets.
Preprint, 2007.
{\tt arXiv:0705.4068}.


\bibitem{HN2}
J.W.~Helton and J.~Nie.
Sufficient and necessary conditions for semidefinite representability of convex hulls and sets.
Preprint, 2007.
{\tt arxiv:0709.4017}.

\bibitem{HV}
J.W.~Helton and V.~Vinnikov.
Linear matrix inequality representation of sets.
{\it Comm. Pure Appl. Math.} {\bf 60} (2007), no. 5, 654--674.

\bibitem{HJ}
R.A.~Horn and C.R.~Johnson.
{\it Topics in Matrix Analysis}. Cambridge University Press, 1994.

\bibitem{lang}
S.~Lang.
{\it Algebra}. Third edition,
Springer-Verlag, New York, 2000.

\bibitem{NemMC}
A.~Nemirovski.
{\em The Matrix Cube Problem:
Approximations and Applications}.
Lecture Notes, 2001,
\url{http://www2.isye.gatech.edu/~nemirovs/}.

\bibitem{NPS}
J.~Nie, P.~Parrilo and B.~Sturmfels.
Semidefinite Representation of the k-Ellipse.
{\it IMA Volume 146: Algorithms in Algebraic Geometry (Eds. A. Dickenstein, F.-O. Schreyer, and A. Sommese)},
Springer, New York, 2008.

\bibitem{NRS}
J.~Nie, K.~Ranestad and B.~Sturmfels,
The algebraic degree of semidefinite programming.
{\tt  arXiv:math/0611562},
to appear in {\it Math.~Programming}.

\bibitem{RG}
M.~Ramana and A.J.~Goldman. Some geometric results in
semidefinite programming. {\it Journal of Global Optimization}
{\bf 7} (1995) 33--50.

\bibitem{sedumi}
J.F.~Sturm. SeDuMi 1.02, a MATLAB toolbox for optimization over symmetric cones.
{\it Optimization Methods and Software}, {\bf 11/12} (1999) 625--653.

\bibitem{WSV}
H. Wolkowicz, R. Saigal, and L. Vandenberghe (Eds.).
{\it Handbook of Semidefinite Programming:
Theory, Algorithms, and Applications},
Kluwer Academic Publishers, 2000.

\end{thebibliography}
\end{document}